\documentclass{article}
\usepackage[%
journal=AdM,
lang=american,
]{ems-journal}

\numberwithin{equation}{section}

\theoremstyle{plain}
\newtheorem{theorem}{Theorem}[section]
\newtheorem{proposition}[theorem]{Proposition}
\newtheorem{lemma}[theorem]{Lemma}
\newtheorem{corollary}[theorem]{Corollary}
\theoremstyle{definition}

\newcommand{\Z}{\mathbb Z}
\newcommand{\Zn}{\mathbb Z_n}
\newcommand{\trsum}[1]{3^{\wedge}#1}
\newcommand{\sigmaS}{\sigma}
\newcommand{\muZ}{\mu_{\Z}}

\begin{document}

\title{The least non-partitionable zero-sum subset for zero-sum triples in finite abelian groups}
\titlemark{Least non-partitionable zero-sum subsets}

\emsauthor{1}{
	\givenname{Yutong}
	\surname{Zhang}
	\mrid{}
	\zblid{}
	\orcid{0009-0000-1220-0702}}{Y.~Zhang}

\emsauthor{2}{
	\givenname{Yaoran}
	\surname{Yang}
	\mrid{}
	\zblid{}
	\orcid{0009-0004-2832-9163}}{Y.~Yang}

\Emsaffil{1}{
	\department{School of Mathematics}
	\organisation{Sichuan University}
	\rorid{}
	\address{}
	\zip{610065}
	\city{Chengdu}
	\country{China}
	\affemail{yutongzhang@stu.scu.edu.cn}}

\Emsaffil{2}{
	\department{School of Mathematics}
	\organisation{Sichuan University}
	\rorid{}
	\address{}
	\zip{610065}
	\city{Chengdu}
	\country{China}
	\affemail{yangyaoran@stu.scu.edu.cn}}

\classification[20K01, 05C70]{11B75}
\keywords{zero-sum subset, finite abelian group, cyclic group, zero-sum partition, restricted sumset, Harborth constant}

\begin{abstract}
Let \(G\) be a finite abelian group, and let \(\mu(G)\) denote the least size of a subset \(S\subseteq G\) with \(3\mid |S|\), total sum zero, and no partition into zero-sum triples; put \(\mu(G)=\infty\) if no such subset exists. We prove the exact classification \(\mu(G)=\infty\) precisely for groups of order at most \(8\) and for \(G\cong C_3^2\), while \(\mu(G)=6\) for every other finite abelian group. The cyclic special case gives \(\mu(\mathbb Z_n)=\infty\) for \(1\le n\le 8\) and \(\mu(\mathbb Z_n)=6\) for \(n\ge 9\), answering the corresponding non-partite cyclic question. We also record a higher-uniformity interval construction which explains the large cyclic witnesses as the case \(k=3\) of a general \(k\)-tuple obstruction.
\end{abstract}

\maketitle

\section{Introduction}

Let \(G\) be an additive abelian group. A \emph{zero-sum triple partition} of a finite subset \(S\subseteq G\) is a decomposition
\[
  S=T_1\sqcup T_2\sqcup\cdots\sqcup T_m,
  \qquad |T_i|=3,
  \qquad \sum_{t\in T_i}t=0\quad (1\le i\le m),
\]
where \(m\ge0\); for \(m=0\) this is the empty partition of the empty set. For a finite subset \(S\) of an additive abelian group \(G\), write
\[
  \sigmaS(S)=\sum_{s\in S}s,
  \qquad
  \trsum{S}=\{x+y+z\mid x,y,z\in S,\ |\{x,y,z\}|=3\}\subseteq G,
\]
where \(\sigmaS(S)\) is evaluated in \(G\).
Call \(S\) \emph{admissible} if \(3\mid |S|\) and \(\sigmaS(S)=0\). For a finite abelian group \(G\), define \(\mu(G)\) to be the least cardinality of a non-partitionable admissible subset of \(G\), and put \(\mu(G)=\infty\) if no such subset exists. For cyclic groups we write
\[
  \muZ(n)=\mu(\Zn),
\]
where \(\Zn\) denotes the cyclic group of order \(n\).

In \cite[Problem 7.4]{MuyesserPokrovskiy}, after asking for the asymptotics of a partite cyclic matching threshold \(g(n)\), M\"uyesser and Pokrovskiy ask the following non-partite variant:
\begin{quote}
What is the smallest subset \(S\subseteq \Z_n\) with \(|S|\) divisible by \(3\), \(\sum S=0\), and \(S\) not partitionable into zero-sum triples?
\end{quote}
The cyclic case of the theorem below answers that question exactly. The same method gives a complete finite abelian group classification, with one additional noncyclic exceptional group.

The problem is connected with complete mappings and group partitions, beginning with Hall--Paige \cite{HallPaige} and Friedlander, Gordon, and Tannenbaum \cite{FriedlanderGordonTannenbaum}, and continuing in recent work such as \cite{CichaczCompleteMappings}. Zero-sum partition questions for finite abelian groups go back to Tannenbaum \cite{Tannenbaum1981,Tannenbaum1983}; see also \cite{KaplanLevRoditty,Zeng,CichaczSameOrder,CichaczTwoPower,CichaczZSP,CichaczSurvey,LladoMoragas,GaoGeroldinger}. The obstruction \(0\notin\trsum{S}\) is a restricted-sumset condition; relevant references include \cite{DiasHamidoune,AlonNathansonRuzsa,GallardoGrekosHabsiegerHennecartLandreauPlagne,Lev,BajnokCritical,BajnokCorrigendum,BajnokEdwards}. The related \(k\)-Harborth constant perspective appears in \cite{LemosMoriyaMouraSilva}.

The main result is the following closed formula.

\begin{theorem}\label{thm:finite-abelian-main}
Let \(G\) be a finite abelian group. Then
\[
  \mu(G)=
  \begin{cases}
  \infty, & |G|\le 8\text{ or }G\cong C_3^2,\\[1mm]
  6, & \text{otherwise}.
  \end{cases}
\]
\end{theorem}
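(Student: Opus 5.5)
Three separate facts are needed: a lower bound \(\mu(G)\ge 6\) for every \(G\); a size-\(6\) witness in every group other than the exceptions; and, for the exceptional groups, a proof that every admissible subset is partitionable.

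\emph{Lower bound.} The sizes \(0\) and \(3\) are trivial. The empty set has the empty partition, and an admissible \(3\)-set is itself a zero-sum triple. So \(\mu(G)\ge 6\) always, and whenever a size-\(6\) witness exists we get \(\mu(G)=6\). An admissible \(6\)-set \(S\) fails to be partitionable exactly when \(0\notin\trsum{S}\). Indeed, if \(T\subseteq S\) is a zero-sum triple, then \(S\setminus T\) sums to \(\sigmaS(S)-0=0\), so \(\{T,S\setminus T\}\) is a partition. Hence a witness is a \(6\)-set with \(\sigmaS(S)=0\) and no zero-sum \(3\)-subset.

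\emph{Constructions.} I would reduce to a few base cases by lifting through subgroups. Suppose \(H\le G\) and \(S\subseteq H\) is a witness in \(H\). Then \(S\) is still a witness in \(G\), since sums are computed in \(H\). Every finite abelian group with \(|G|\ge 9\) and \(G\not\cong C_3^2\) contains a subgroup from a small list. I would use the cyclic groups \(\Zn\) for \(9\le n\le 17\) together with small noncyclic groups such as \(C_2\times C_6\), \(C_2^4\), \(C_2^2\times C_4\), \(C_3\times C_6\), \(C_3^3\), \(C_4^2\) and \(C_5^2\). The cleaner route is to use quotients and direct witnesses instead. For cyclic \(\Zn\) with \(n\ge 9\), I would use an interval-type set, for example \(S=\{1,2,3,4,5,-15\}\) suitably adjusted modulo \(n\). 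Better is a uniform family like \(\{a,a+1,\dots\}\) with a compensating element, checking that the triple sums lie in an interval that avoids \(0\bmod n\). This is the \(k=3\) interval obstruction mentioned in the abstract. For noncyclic groups, any \(G\) of order \(\ge 9\) other than \(C_3^2\) has either a cyclic subgroup of order \(\ge 9\) or a subgroup among \(C_2^4\), \(C_2^2\times C_4\), \(C_2\times C_8\), \(C_4^2\), \(C_2\times C_6\), \(C_3\times C_6\), \(C_3^3\), \(C_5^2\) (and so on), obtained from the invariant-factor decomposition. So I would take \(H\) minimal and exhibit an explicit witness in each minimal \(H\). As an illustration, in \(C_2^4\) I would find six nonzero vectors summing to zero with no three summing to zero, i.e.\ a \(6\)-subset with no three-term linear dependency and total sum zero.

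\emph{Exceptional groups.} For \(|G|\le 8\) and for \(C_3^2\), I must show that every admissible set is partitionable. Size \(6\) reduces to showing \(0\in\trsum{S}\) for every zero-sum \(6\)-set. Sizes \(9\), \(12\), \(15\), … occur only in \(C_3^2\), and there \(|S|=9\) means \(S=G\), which is partitioned by the cosets of a subgroup of order \(3\). For \(|G|\le 8\) only size \(6\) matters. Complements help here: when \(|G|\) is \(7\) or \(8\), a \(6\)-set misses one or two elements, which gives very few configurations up to automorphism. When \(|G|=6\), the set \(S=G\) has sum \(0\) only if \(G\cong\Z_6\) (sum \(3\neq0\)), so that case is vacuous. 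For \(C_3^2\), a \(6\)-set with zero sum is the complement of a zero-sum \(3\)-set, which is either a line (affine) or \(\{x,x,\dots\}\). Its complement is a union of two parallel lines, or something similar, so it contains a line and \(0\in\trsum{S}\).

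The main obstacle is producing witnesses uniformly for all groups outside the exceptional list without an unwieldy case split. I expect the minimal-subgroup list to be the delicate step, so I would verify it carefully and let small finite checks handle the base cases.
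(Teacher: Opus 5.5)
\paragraph{Gaps in the proposal.} Your skeleton matches the paper's: lower bound $6$, the equivalence with $0\notin\trsum{S}$ for zero-sum six-sets, inheritance from subgroups, a list of minimal subgroups, and a direct check of the exceptional groups. But the actual content of the theorem is never supplied: the explicit witnesses and the finite verifications are missing. Where you do commit to specifics, the steps fail.

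In $C_2^4$ there are \emph{no} six nonzero vectors with total sum zero and no zero-sum triple. Suppose $S=\{v_1,\dots,v_6\}\subseteq C_2^4\setminus\{0\}$ has $\sigmaS(S)=0$. An equality $v_i+v_j=v_k+v_l$ with $\{i,j\}\ne\{k,l\}$ forces the two pairs to be disjoint, and then the two remaining elements coincide. So the $15$ pairwise sums are distinct and nonzero. If no triple of $S$ sums to zero, these sums all avoid $S$, yet only $9$ nonzero elements lie outside $S$. A witness in $C_2^4$ must therefore contain $0$, as the paper's $\{0,e_1,e_2,e_3,e_4,e_1+e_2+e_3+e_4\}$ does.

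Your cyclic example $\{1,2,3,4,5,-15\}$ works only for $n\in\{13,14,15\}$ and $n\ge 21$. For $16\le n\le 20$ the element $-15$ collides with one of $1,\dots,5$. For $9\le n\le 12$ it either collides or creates a zero-sum triple, e.g.\ $3+4+5\equiv 0\pmod{12}$ or $1+3+7\equiv0\pmod{11}$. The paper uses $\{0,1,2,3,4,-10\}$ for $n\ge15$, plus explicit, checked witnesses for each $9\le n\le 14$. ``Suitably adjusted'' does not replace these.

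Your explicit list of minimal subgroups omits $C_7^2$. It has exponent $7$, has no element of order $\ge 9$, and contains none of the groups you list, so it needs its own witness (the paper uses $\{0,b,2b,3b,a,-a-6b\}$). None of your other base groups receives a witness either: $C_5^2$, $C_3^3$, $C_4^2$, $C_4\oplus C_2^2$, $C_8\oplus C_2$, $C_2^2\oplus C_3$, $C_2\oplus C_3^2$. Yet the ``otherwise'' case of the theorem rests entirely on these witnesses.

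On the exceptional side, orders $7$ and $8$ are dismissed as ``very few configurations'' but never checked. You must verify the following:
\begin{itemize}
\item In $C_7$, the only admissible six-set is $C_7\setminus\{0\}=\{1,2,4\}\sqcup\{3,5,6\}$.
\item In $C_8$, the complements of $\{0,4\}$, $\{1,3\}$, $\{5,7\}$ each contain a zero-sum triple.
\item $C_2^3$ has no admissible six-set.
\item In $C_4\oplus C_2$, the complements of $\{a,-a\}$ and $\{a+b,-a+b\}$ split into zero-sum triples.
\end{itemize}
Your $C_3^2$ argument is fine once you note that every zero-sum $3$-subset is an affine line; the ``$\{x,x,\dots\}$'' alternative cannot occur for a set. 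The complement of such a line is the union of the two parallel lines.
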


For cyclic groups this gives the exact answer to the question quoted above.

\begin{corollary}\label{cor:cyclic-main}
For every integer \(n\ge1\),
\[
  \muZ(n)=
  \begin{cases}
  \infty,&1\le n\le8,\\[1mm]
  6,&n\ge9.
  \end{cases}
\]
Equivalently, every admissible subset of \(\Zn\) is partitionable for \(1\le n\le8\), while for each \(n\ge9\) there is a six-element admissible subset of \(\Zn\) with no zero-sum triple partition.
\end{corollary}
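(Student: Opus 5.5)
The corollary is the cyclic case of Theorem~\ref{thm:finite-abelian-main}. We apply the theorem with \(G=\Zn\). Since \(|\Zn|=n\), the condition \(|G|\le 8\) becomes \(n\le 8\). It remains to see that the second exceptional group never arises: \(\Zn\cong C_3^2\) is impossible, because \(C_3^2\) has exponent \(3\) while \(\Zn\) has an element of order \(n\). If \(n=9\) the groups differ since \(\Z_9\) is cyclic and \(C_3^2\) is not; if \(n\neq 9\) the orders already differ. Hence \(\muZ(n)=\infty\) exactly for \(1\le n\le 8\), and \(\muZ(n)=6\) for \(n\ge 9\).

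The ``equivalently'' sentence is then just the definition of \(\mu\) unpacked. The value \(\mu=\infty\) means that no admissible subset fails to be partitionable. The value \(\mu=6\) means two things: a six-element admissible non-partitionable subset exists, and every admissible subset of size \(0\) or \(3\) is partitionable. The latter is immediate anyway. The empty set has the empty partition. A three-element set with sum zero is itself a zero-sum triple.

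Taking the theorem as given, there is no real obstacle. If one wanted the cyclic case independently, the plan would be as follows. For \(n\ge 9\), exhibit an explicit six-element admissible \(S\) with \(0\notin\trsum{S}\). For example, one can take an interval-type set whose six elements, viewed as integers, all lie in a window of small length. Then every three-element sum lies strictly between \(0\) and \(n\), or strictly between \(n\) and \(2n\), while the total is \(\equiv 0\pmod n\); checking that such a window fits is where \(n\ge 9\) is needed. For \(n\le 8\), a finite check is required: each admissible set of size \(6\) must split into two zero-sum triples, while the size-\(0\) and size-\(3\) cases are trivial. The sizes \(9\) and up are handled by passing to complements. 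The only genuine work is this small-\(n\) verification, which is what the theorem supplies.
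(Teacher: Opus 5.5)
Your main argument is exactly the paper's: apply Theorem~\ref{thm:finite-abelian-main} with \(G=\Zn\) and observe that a cyclic group is never isomorphic to \(C_3^2\), so the exceptional range is precisely \(1\le n\le 8\). The optional standalone sketch is not needed, and as stated it would not work: a bound on the spread of \(S\) alone cannot certify \(0\notin\trsum{S}\) for \(n=9,10,12\) (the paper itself uses the interval construction only for \(n\ge15\) and explicit certificates for \(9\le n\le14\)), and for \(n\le8\) there are no subsets of size \(9\) or more, so the remark about passing to complements is vacuous.
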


The proof has three ingredients. First, no obstruction can have size less than \(6\), and a six-element zero-sum subset is non-partitionable exactly when it contains no zero-sum triple. Second, explicit cyclic witnesses exist from order \(9\) onward; for \(n\ge15\) these are instances of a general \(k\)-uniform interval construction, and the six orders \(9\le n\le14\) are covered by finite certificates. Third, when a finite abelian group has no element of order at least \(9\), the structure theorem leaves only bounded-exponent cases; these are handled by a short list of model subgroups and by the two exceptional families in Theorem~\ref{thm:finite-abelian-main}.

\section{General reductions}

\begin{lemma}\label{lem:lower-bound-six-general}
Let \(G\) be an abelian group. Any non-partitionable admissible subset of \(G\) has cardinality at least \(6\). Hence, whenever such a subset of cardinality \(6\) exists, it is automatically of minimum possible size.
\end{lemma}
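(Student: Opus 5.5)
Since an admissible subset \(S\) has \(3\mid |S|\), its cardinality is \(0\), \(3\), or at least \(6\). The plan is therefore to show that every admissible subset of size \(0\) or \(3\) is partitionable. That rules out non-partitionable admissible subsets of size below \(6\), and the second sentence of Lemma~\ref{lem:lower-bound-six-general} follows at once from the first.

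\emph{Size \(0\).} The empty set is admissible, because \(\sigmaS(\emptyset)=0\) and \(3\mid 0\). It is partitionable by the empty partition (\(m=0\)), which the definition explicitly allows.

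\emph{Size \(3\).} Write \(S=\{a,b,c\}\) with \(a,b,c\) distinct, since \(S\) is a set. Admissibility gives \(a+b+c=\sigmaS(S)=0\). So the single block \(T_1=S\) has \(|T_1|=3\) and zero sum, and it is a zero-sum triple partition with \(m=1\). This works in any abelian group; no finiteness or structure of \(G\) is needed.

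There is no real obstacle here. The only point needing care is the convention that \(m=0\) is allowed, so that \(\emptyset\) counts as partitionable. Hence any non-partitionable admissible subset has \(|S|\ge 6\). Consequently, a non-partitionable admissible subset of size exactly \(6\), if one exists, attains the minimum possible size, and in that case \(\mu(G)=6\).
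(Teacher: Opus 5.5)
Your proof is correct and follows essentially the same route as the paper: the empty set is partitioned by the empty partition, and a three-element zero-sum set is itself a single zero-sum block, so \(6\) is the first admissible size at which a counterexample can occur. One small point: your closing conclusion \(\mu(G)=6\) applies only when \(G\) is finite, since \(\mu\) is defined only for finite abelian groups.
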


\begin{proof}
The empty set is partitionable by the empty partition. If \(|S|=3\) and \(\sigmaS(S)=0\), then \(S\) itself is a zero-sum triple, hence \(S\) is partitioned by the single block \(S\). Thus no admissible subset of cardinality \(0\) or \(3\) can be a counterexample. Since admissible cardinalities are multiples of \(3\), the first possible non-partitionable cardinality is \(6\).
\end{proof}

\begin{lemma}\label{lem:six-reduction}
Let \(G\) be an abelian group, and let \(S\subseteq G\) satisfy
\[
  |S|=6,
  \qquad \sigmaS(S)=0.
\]
Then \(S\) has a zero-sum triple partition if and only if
\[
  0\in\trsum{S}.
\]
Consequently, any six-element subset \(S\subseteq G\) satisfying \(\sigmaS(S)=0\) and \(0\notin\trsum{S}\) is a non-partitionable admissible set.
\end{lemma}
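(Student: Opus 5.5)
The plan is to prove both directions of the equivalence directly from the definitions; the ``consequently'' clause then follows at once.

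\emph{Forward direction.} Suppose \(S\) has a zero-sum triple partition. Since \(|S|=6\), the partition has exactly two blocks, \(S=T_1\sqcup T_2\) with \(|T_1|=3\) and \(\sigmaS(T_1)=0\). Write \(T_1=\{x,y,z\}\). These are three distinct elements of \(S\) with \(x+y+z=0\), so \(0\in\trsum{S}\).

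\emph{Reverse direction.} Suppose \(0\in\trsum{S}\). Then there are pairwise distinct \(x,y,z\in S\) with \(x+y+z=0\). Put \(T_1=\{x,y,z\}\) and \(T_2=S\setminus T_1\). Then \(|T_1|=3\), and since \(T_1\subseteq S\) and \(|S|=6\), also \(|T_2|=3\). By additivity of \(\sigmaS\) over the disjoint union \(S=T_1\sqcup T_2\),
\[
  \sigmaS(T_2)=\sigmaS(S)-\sigmaS(T_1)=0-0=0.
\]
Hence \(T_1\sqcup T_2\) is a zero-sum triple partition of \(S\).

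\emph{Consequence.} If \(|S|=6\), \(\sigmaS(S)=0\) and \(0\notin\trsum{S}\), then \(S\) is admissible because \(3\mid 6\). By the reverse implication just proved (in contrapositive form), \(S\) has no zero-sum triple partition. By Lemma~\ref{lem:lower-bound-six-general}, such an \(S\) is moreover of minimum possible size.

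There is no real obstacle here. The only point needing care is that \(\trsum{S}\) is defined using three \emph{distinct} elements. This matches the requirement \(|T_i|=3\) for blocks, and it is what guarantees that the complementary set \(T_2\) has exactly three elements.
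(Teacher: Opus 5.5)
Your proof is correct and matches the paper's argument: one block of the two-block partition witnesses \(0\in\trsum{S}\), and conversely the complement of a zero-sum triple has three elements and sum \(\sigmaS(S)-0=0\). Your added remark on minimality via Lemma~\ref{lem:lower-bound-six-general} is harmless; the paper makes that point in Lemma~\ref{lem:subgroup-inheritance} instead.
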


\begin{proof}
If
\[
  S=T_1\sqcup T_2,
  \qquad |T_1|=|T_2|=3,
  \qquad \sigmaS(T_1)=\sigmaS(T_2)=0,
\]
then \(T_1\) is a three-element subset of \(S\) with sum zero, so \(0\in\trsum{S}\). Conversely, suppose \(T\subseteq S\), \(|T|=3\), and \(\sigmaS(T)=0\). Then
\[
  \sigmaS(S\setminus T)=\sigmaS(S)-\sigmaS(T)=0.
\]
Since \(|S\setminus T|=3\), the decomposition
\[
  S=T\sqcup(S\setminus T)
\]
forms a zero-sum triple partition. This proves the equivalence.
\end{proof}

\begin{lemma}\label{lem:subgroup-inheritance}
Let \(H\) be a subgroup of an abelian group \(G\). If \(H\) contains a six-element subset \(S\) with \(\sigmaS(S)=0\) and \(0\notin\trsum{S}\), then \(\mu(G)=6\).
\end{lemma}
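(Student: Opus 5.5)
The plan is to observe that the defining conditions are intrinsic to the set \(S\) and do not depend on the ambient group. Regard \(S\subseteq H\) as a subset of \(G\). Since \(H\) is a subgroup, the sum \(\sigmaS(S)\) computed in \(H\) equals the sum computed in \(G\), because the inclusion \(H\hookrightarrow G\) is a group homomorphism. So \(\sigmaS(S)=0\) holds in \(G\) as well. In the same way, every sum \(x+y+z\) of three distinct elements of \(S\) is the same element whether it is evaluated in \(H\) or in \(G\). Hence \(\trsum{S}\) is the same subset of \(H\subseteq G\) in both settings, and \(0\notin\trsum{S}\) remains true in \(G\).

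Next I will apply Lemma~\ref{lem:six-reduction} in \(G\). It shows that \(S\), viewed as a six-element subset of \(G\) with \(\sigmaS(S)=0\) and \(0\notin\trsum{S}\), is a non-partitionable admissible subset of \(G\). This gives the upper bound \(\mu(G)\le 6\). Lemma~\ref{lem:lower-bound-six-general}, which holds for any abelian group and so in particular for \(G\), gives \(\mu(G)\ge6\). Together these yield \(\mu(G)=6\).

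No step here is a genuine obstacle. The only point to make explicit is that neither admissibility nor the zero-sum condition on triples depends on the ambient group. This follows because the group operation of \(H\) is the restriction of that of \(G\), and zero is the same element in both.
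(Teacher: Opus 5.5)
Your proposal is correct and follows the paper's own argument. The paper likewise observes that \(S\), its total sum, and its triple sums are the same whether computed in \(H\) or in \(G\); it then applies Lemma~\ref{lem:six-reduction} in \(G\) to get a six-element non-partitionable admissible subset, and uses Lemma~\ref{lem:lower-bound-six-general} for minimality.
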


\begin{proof}
The same subset \(S\) is a subset of \(G\), its total sum is still zero, and a triple of elements of \(S\) sums to zero in \(G\) if and only if it sums to zero in \(H\). Thus \(S\) is a six-element non-partitionable admissible subset of \(G\) by Lemma~\ref{lem:six-reduction}. Lemma~\ref{lem:lower-bound-six-general} gives minimality.
\end{proof}

\section{Cyclic witnesses and a higher-uniformity construction}

For \(k\ge2\), define \(\mu_k(\Zn)\) to be the least cardinality of a subset \(S\subseteq\Zn\) such that \(k\mid |S|\), \(\sigmaS(S)=0\), and \(S\) admits no partition into zero-sum \(k\)-subsets. If no such subset exists, put \(\mu_k(\Zn)=\infty\).

\begin{proposition}\label{prop:k-uniform}
For every \(k\ge2\) and every \(n\ge k(2k-1)\),
\[
  \mu_k(\Zn)=2k.
\]
\end{proposition}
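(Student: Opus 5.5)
Two things must be shown: the lower bound \(\mu_k(\Zn)\ge 2k\), and a \(2k\)-element admissible set with no zero-sum \(k\)-subset.

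\emph{Lower bound.} This is the \(k\)-uniform analogue of Lemma~\ref{lem:lower-bound-six-general}. Admissible cardinalities are multiples of \(k\). The empty set is partitioned by the empty partition, and a \(k\)-set with sum zero is its own block. Hence no counterexample has size \(0\) or \(k\), so \(\mu_k(\Zn)\ge 2k\).

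\emph{Reduction for size \(2k\).} Next I would prove the analogue of Lemma~\ref{lem:six-reduction}. Let \(|S|=2k\) with \(\sigmaS(S)=0\). Then \(S\) is partitionable if and only if some \(k\)-subset \(T\subseteq S\) has sum zero, because then \(S\setminus T\) is also a zero-sum \(k\)-set. So it suffices to exhibit a \(2k\)-subset \(S\subseteq\Zn\) with \(\sigmaS(S)=0\) and no zero-sum \(k\)-subset.

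\emph{Construction.} I would use an interval together with one large compensating element. The abstract suggests the intended witness is an interval-type set, and \(n\ge k(2k-1)\) is exactly \(\sum_{j=1}^{2k-1} j\). Take
\[
  S=\{1,2,\dots,2k-1\}\cup\{-M\},\qquad M=\sum_{j=1}^{2k-1}j=k(2k-1),
\]
so \(\sigmaS(S)=0\) in \(\Zn\). The \(2k\) elements are distinct provided \(-M\not\equiv j\) for \(1\le j\le 2k-1\), that is, \(n\nmid M+j\). This needs checking; if it fails, I would adjust the construction as described at the end.

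Now let \(T\) be a \(k\)-subset, and take residues as integers in \([0,n)\).

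If \(T\) avoids \(-M\), its integer sum lies between \(1+\dots+k=k(k+1)/2\) and \(k+\dots+(2k-1)=k(3k-1)/2\). Since \(k(3k-1)/2<k(2k-1)\le n\), this sum is positive and less than \(n\), so it is nonzero modulo \(n\).

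If \(-M\in T\), then \(\sigmaS(T)=-\sigmaS(S\setminus T)\). The complement \(S\setminus T\) is a \(k\)-subset of \(\{1,\dots,2k-1\}\), so by the previous case its sum is nonzero modulo \(n\). Hence \(\sigmaS(T)\neq 0\).

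\emph{Main obstacle.} The genuine subtlety is distinctness of \(-M\) from the interval. If \(n=M+j\) for some \(1\le j\le 2k-1\), then \(-M\equiv j\) is a repeated element. I would first check whether these boundary values of \(n\) can occur under the hypothesis. If they can, I would modify the witness: use \(\{0,1,\dots,2k-2\}\) together with \(-M'\), where \(M'=\binom{2k-1}{2}\). The case analysis is the same, since a \(k\)-subset of \(\{0,\dots,2k-2\}\) has integer sum between \(k(k-1)/2\ge1\) (for \(k\ge2\)) and \(k(3k-3)/2<n\). Alternatively, I would shift the interval so that the collision residue is avoided. The threshold \(k(2k-1)\) leaves enough room for one of these choices at each \(n\), and the proof should end by checking that one of them works for every \(n\ge k(2k-1)\).
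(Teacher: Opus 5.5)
The proposal is not yet a complete proof, because the issue you flagged yourself does occur. For every \(n\) with \(k(2k-1)<n\le k(2k-1)+2k-1\), your first set has \(-M\equiv n-M\in\{1,\dots,2k-1\}\). For instance, \(k=2\), \(n=7\) gives \(-6\equiv 1\). So that set has fewer than \(2k\) elements and cannot be the witness. As written, you lead with a construction that fails on a range of \(n\) just above the threshold. You then leave the check of the alternative open, with only a vague ``one of these choices at each \(n\)''. Your lower bound and the reduction to finding a zero-sum \(k\)-subset are fine and match the paper.

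The fix is already in your text. Your fallback \(\{0,1,\dots,2k-2\}\cup\{-M'\}\) with \(M'=(k-1)(2k-1)\) is exactly the paper's witness, and it needs no case split. For \(0\le j\le 2k-2\) one has \(0<M'+j\le M'+2k-2=k(2k-1)-1<n\). Hence \(-M'\) never coincides with an interval element, for every \(n\ge k(2k-1)\). The interval itself consists of distinct residues because \(n>2k-2\).

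Use that set from the start and drop the first construction and the ``shift the interval'' alternative; your argument is then complete and essentially the paper's. Your treatment of the case \(-M'\in T\) via \(\sigma(T)=-\sigma(S\setminus T)\), with \(S\setminus T\) a \(k\)-subset of the interval, is a tidy substitute for the paper's direct estimate of the negative integer sum of \(T\). Both rest on the same range \([k(k-1)/2,\,3k(k-1)/2]\subseteq[1,n-1]\).
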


\begin{proof}
No admissible set of cardinality \(0\) or \(k\) can be a counterexample: the empty set has the empty partition, and a \(k\)-element admissible set is itself a zero-sum \(k\)-block. Hence every counterexample has cardinality at least \(2k\).

Set
\[
  M=0+1+\cdots+(2k-2)=(k-1)(2k-1),
\]
and define
\[
  S=\{0,1,2,\ldots,2k-2,-M\}\subseteq\Zn.
\]
Since \(n\ge k(2k-1)\), the interval \(\{0,1,\ldots,2k-2\}\) consists of distinct residues modulo \(n\). Also \(-M\not\equiv j\pmod n\) for \(0\le j\le 2k-2\), because then \(n\) would divide \(M+j\), whereas
\[
  0<M+j\le M+2k-2=k(2k-1)-1<n.
\]
Thus \(|S|=2k\), and \(\sigmaS(S)=0\).

We claim that no \(k\)-element subset of \(S\) has sum zero. First suppose that a \(k\)-element subset \(T\) does not contain \(-M\). Then its integer sum is positive and at most
\[
  (2k-2)+(2k-3)+\cdots+(k-1)=\frac{3k(k-1)}{2}<k(2k-1)\le n,
\]
so \(\sigmaS(T)\not\equiv0\pmod n\).

Now suppose that \(-M\in T\). Write \(T=\{-M\}\cup U\), where \(U\subseteq\{0,1,\ldots,2k-2\}\) and \(|U|=k-1\). The integer sum of the elements of \(U\) is at most
\[
  (2k-2)+(2k-3)+\cdots+k=\frac{(3k-2)(k-1)}{2}<M,
\]
so the integer sum of \(T\) is negative. Its absolute value is at most
\[
  M-(0+1+\cdots+(k-2))
  =(k-1)(2k-1)-\frac{(k-1)(k-2)}{2}
  =\frac{3k(k-1)}{2}<n.
\]
Therefore \(\sigmaS(T)\not\equiv0\pmod n\) in this case as well.

Thus \(S\) has size \(2k\), total sum zero, and no zero-sum \(k\)-subset. In particular it admits no partition into zero-sum \(k\)-subsets. Together with the lower bound, this gives \(\mu_k(\Zn)=2k\).
\end{proof}

For \(k=3\), Proposition~\ref{prop:k-uniform} gives the following cyclic witnesses for all sufficiently large orders.

\begin{corollary}\label{cor:large-cyclic-witness}
For every \(n\ge15\), the subset
\[
  \{0,1,2,3,4,-10\}\subseteq\Zn
\]
has cardinality \(6\), total sum zero, and no zero-sum three-element subset.
\end{corollary}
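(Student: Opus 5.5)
This is the case \(k=3\) of Proposition~\ref{prop:k-uniform}, so the plan is to specialize that proof and check that its constants give exactly the displayed set. With \(k=3\) we have \(2k-2=4\) and \(M=(k-1)(2k-1)=2\cdot5=10\), so the set \(S\) constructed there is \(\{0,1,2,3,4,-10\}\). The hypothesis \(n\ge k(2k-1)\) becomes \(n\ge15\). Hence, for every \(n\ge15\), the proof of Proposition~\ref{prop:k-uniform} shows that \(S\) has \(2k=6\) distinct elements, sums to zero, and has no zero-sum \(k\)-subset, i.e.\ no zero-sum three-element subset.

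As a sanity check, I would also verify the three claims directly for \(n\ge15\).

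\emph{Distinctness.} The residues \(0,\dots,4\) are distinct. Moreover \(-10\equiv j\) with \(0\le j\le4\) would force \(n\mid 10+j\), but \(10\le 10+j\le14<n\).

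\emph{Total sum.} The integer sum is \(0+1+2+3+4-10=0\).

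\emph{No zero-sum triple.} Consider a triple inside \(\{0,\dots,4\}\). Its integer sum lies in \([3,9]\), since the smallest is \(0+1+2=3\) and the largest is \(2+3+4=9\). Now consider a triple containing \(-10\). Its integer sum is \(-10+a+b\) with \(a<b\) in \(\{0,\dots,4\}\), which lies in \([-9,-3]\), since \(a+b\) ranges over \([1,7]\). In both cases the integer sum is nonzero with absolute value at most \(9<15\le n\), so it is nonzero modulo \(n\).

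There is no real obstacle here. The only point needing care is that distinctness modulo \(n\) genuinely uses \(n\ge15\), and that bound matches the hypothesis exactly.
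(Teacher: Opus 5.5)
Your proposal is correct and follows the paper's route: the corollary is exactly the case \(k=3\) of Proposition~\ref{prop:k-uniform}, with \(M=10\) and \(k(2k-1)=15\). Your direct check of distinctness and of the triple-sum ranges \([3,9]\) and \([-9,-3]\) is also accurate.
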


The remaining cyclic orders \(9\le n\le14\) are covered by the finite certificate in Tables~\ref{tab:small-witnesses} and~\ref{tab:small-sum-certificates}. In Table~\ref{tab:small-witnesses}, \(\trsum{S}\) denotes the restricted three-fold sumset modulo \(n\).

\begin{table}[ht]
\centering
\begin{tabular}{cccc}
\toprule
\(n\) & \(S\subseteq\Zn\) & \(\sigmaS(S)\) & \(\trsum{S}\) \\
\midrule
\(9\)  & \(\{1,2,4,5,7,8\}\)     & \(0\) & \(\Zn\setminus\{0\}\) \\
\(10\) & \(\{1,2,4,6,8,9\}\)     & \(0\) & \(\Zn\setminus\{0\}\) \\
\(11\) & \(\{0,1,3,4,5,9\}\)     & \(0\) & \(\Zn\setminus\{0\}\) \\
\(12\) & \(\{0,1,2,6,7,8\}\)     & \(0\) & \(\Zn\setminus\{0,6\}\) \\
\(13\) & \(\{0,1,2,6,8,9\}\)     & \(0\) & \(\Zn\setminus\{0\}\) \\
\(14\) & \(\{0,1,2,6,9,10\}\)    & \(0\) & \(\Zn\setminus\{0\}\) \\
\bottomrule
\end{tabular}
\caption{Six-element zero-sum witnesses for \(9\le n\le14\).}
\label{tab:small-witnesses}
\end{table}

For auditability, Table~\ref{tab:small-sum-certificates} lists all \(\binom63=20\) restricted triple sums for each row of Table~\ref{tab:small-witnesses}. If \(S=\{s_1<s_2<\cdots<s_6\}\), the sequence \(R_n(S)\) is ordered lexicographically by triples \((i,j,k)\) with \(1\le i<j<k\le6\):
\[
  R_n(S)=\bigl(s_i+s_j+s_k \bmod n\bigr)_{1\le i<j<k\le6}.
\]

\begin{table}[ht]
\centering
\small
\begin{tabular}{c@{\quad}l@{\qquad}c@{\quad}l}
\toprule
\(n\) & \(R_n(S)\) & \(n\) & \(R_n(S)\) \\
\midrule
\(9\) &
\(\begin{gathered}(7,8,1,2,1,3,4,4,5,7,\\ 2,4,5,5,6,8,7,8,1,2)\end{gathered}\)
&
\(10\) &
\(\begin{gathered}(7,9,1,2,1,3,4,5,6,8,\\ 2,4,5,6,7,9,8,9,1,3)\end{gathered}\)
\\
\(11\) &
\(\begin{gathered}(4,5,6,10,7,8,1,9,2,3,\\ 8,9,2,10,3,4,1,5,6,7)\end{gathered}\)
&
\(12\) &
\(\begin{gathered}(3,7,8,9,8,9,10,1,2,3,\\ 9,10,11,2,3,4,3,4,5,9)\end{gathered}\)
\\
\(13\) &
\(\begin{gathered}(3,7,9,10,8,10,11,1,2,4,\\ 9,11,12,2,3,5,3,4,6,10)\end{gathered}\)
&
\(14\) &
\(\begin{gathered}(3,7,10,11,8,11,12,1,2,5,\\ 9,12,13,2,3,6,3,4,7,11)\end{gathered}\)
\\
\bottomrule
\end{tabular}
\caption{Complete residue certificates for the restricted triple sums in Table~\ref{tab:small-witnesses}.}
\label{tab:small-sum-certificates}
\end{table}

\begin{lemma}\label{lem:cyclic-witness-all}
For every \(n\ge9\), the cyclic group \(\Zn\) contains a six-element subset \(S\) with \(\sigmaS(S)=0\) and \(0\notin\trsum{S}\).
\end{lemma}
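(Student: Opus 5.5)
The argument splits at \(n=15\). For \(n\ge15\) I will apply Corollary~\ref{cor:large-cyclic-witness}, which is Proposition~\ref{prop:k-uniform} with \(k=3\), since then \(k(2k-1)=15\). It gives \(S=\{0,1,2,3,4,-10\}\), which has six distinct elements, total sum \(0+1+2+3+4-10=0\), and no zero-sum three-element subset. So \(0\notin\trsum{S}\).

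For the six orders \(9\le n\le14\), I will use the set listed in Table~\ref{tab:small-witnesses} and check three things.

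\begin{enumerate}
\item The six listed residues are distinct modulo \(n\). This is immediate, since every entry lies in \([0,n-1]\).
\item The listed elements sum to zero modulo \(n\). The integer sums are \(27,30,22,24,26,28\), which are multiples of \(9,10,11,12,13,14\) respectively.
\item No triple sums to zero modulo \(n\). For this I compare each of the twenty residues in the corresponding row of Table~\ref{tab:small-sum-certificates} with the lexicographically ordered triple sums of that set. I then note that \(0\) does not occur in any row.
\end{enumerate}

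In several cases a structural reason makes the third check transparent. For \(n=9\), the set is \(\Z_9\setminus 3\Z_9\). All its elements are \(\equiv1,2\pmod3\), and \(\pm1,\pm2,\pm4\) pair up as negatives. A zero-sum triple would have to use residues \(1,1,1\) or \(2,2,2\pmod 3\), that is \(\{1,4,7\}\) or \(\{2,5,8\}\). Their sums are \(12\equiv3\) and \(15\equiv6\pmod9\), neither of which is \(0\).

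For \(n=10\), the set consists of the nonzero even residues together with \(\pm1\). A zero-sum triple must contain an even number of odd elements. Three evens from \(\{2,4,6,8\}\) sum to \(20-x\not\equiv0\) for each omitted \(x\). A triple of the form \(\{1,9,e\}\) sums to \(e\neq0\).

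The main obstacle is simply that the remaining orders \(11\) to \(14\) lack such a uniform pattern. There I would rely on the explicit residue lists in Table~\ref{tab:small-sum-certificates}, recomputing a few entries to confirm they match the stated sets. Combining the two ranges covers every \(n\ge9\), which completes the proof.
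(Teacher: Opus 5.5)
Your proposal is correct and follows the paper's proof: Corollary~\ref{cor:large-cyclic-witness} for \(n\ge15\), and the certificates of Tables~\ref{tab:small-witnesses} and~\ref{tab:small-sum-certificates} for \(9\le n\le14\); your integer sums \(27,30,22,24,26,28\) and your mod-\(3\) and parity arguments for \(n=9,10\) are correct, optional supplements. One small caution: for \(11\le n\le14\) the argument needs all twenty entries of each row checked against the set, not a spot check of a few; done in full, they do match and none is \(0\).
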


\begin{proof}
For \(n\ge15\), use Corollary~\ref{cor:large-cyclic-witness}. For \(9\le n\le14\), take the set \(S\) displayed in Table~\ref{tab:small-witnesses}. The six displayed representatives are pairwise distinct modulo \(n\), and direct addition gives \(\sigmaS(S)=0\), as recorded in the third column of the table. Table~\ref{tab:small-sum-certificates} lists all \(20\) restricted triple sums modulo \(n\). In each row, \(0\) is absent. Hence \(0\notin\trsum{S}\) for every \(9\le n\le14\).
\end{proof}

\section{Model witnesses in bounded exponent groups}

The next lemma supplies the noncyclic witnesses needed after the cyclic subgroups have been removed. In the table, the displayed generators have the orders indicated by the corresponding direct-sum factors.

\begin{lemma}\label{lem:model-witnesses}
Each of the following finite abelian groups contains a six-element subset \(S\) with \(\sigmaS(S)=0\) and \(0\notin\trsum{S}\).
\[
\begin{array}{c|c}
\text{group} & S \\
\hline
C_5^2=\langle a,b\rangle
& \{b,2b,3b,4b,a,-a\} \\[1mm]
C_p^2=\langle a,b\rangle,\ p\ge7
& \{0,b,2b,3b,a,-a-6b\} \\[1mm]
C_3^3=\langle a,c,d\rangle
& \{0,c,d,c+d,a,-a+c+d\} \\[1mm]
C_2^4=\langle e_1,e_2,e_3,e_4\rangle
& \{0,e_1,e_2,e_3,e_4,e_1+e_2+e_3+e_4\} \\[1mm]
C_8\oplus C_2=\langle a\rangle\oplus\langle b\rangle
& \{0,b,a,a+b,2a,4a\} \\[1mm]
C_4^2=\langle a,b\rangle
& \{0,b,2b,a,a+b,2a\} \\[1mm]
C_4\oplus C_2^2=\langle a\rangle\oplus\langle b,c\rangle
& \{0,c,b,a,a+c,2a+b\} \\[1mm]
C_2^2\oplus C_3=\langle a,b\rangle\oplus\langle c\rangle
& \{0,c,b,b+c,a,a+c\} \\[1mm]
C_2\oplus C_3^2=\langle a\rangle\oplus\langle c,d\rangle
& \{0,c,d,c+d,a,a+c+d\}.
\end{array}
\]
\end{lemma}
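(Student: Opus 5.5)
The lemma asserts nine independent finite facts, and I will verify each row separately. For every row three things need checking: the six listed elements are pairwise distinct, their total sum is \(0\), and none of the \(\binom63=20\) three-element subsets sums to \(0\). Lemma~\ref{lem:six-reduction} is not needed here; the lemma is purely about the listed sets. To keep the work short, I will write every element in coordinates with respect to the displayed direct-sum decomposition. I will then split the triples according to a distinguished coordinate, namely the one carrying \(a\) (or \(e_4\), or \(c\)). The point is that in most rows exactly two elements of \(S\) have a nonzero \(a\)-coordinate, and these two coordinates are opposite or nonzero.

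The uniform argument runs as follows. Call the elements whose \(a\)-coordinate is nonzero \(x\) and \(y\). A triple containing exactly one of \(x,y\) has a nonzero \(a\)-coordinate, so its sum is nonzero. This holds automatically when the factor \(\langle a\rangle\) is split off as a direct summand.

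A triple containing neither \(x\) nor \(y\) lies in the complementary part. Take \(C_p^2\) with \(\{0,b,2b,3b\}\): such triples sum to \(3b,4b,5b,6b\), which are nonzero since \(p\ge7\). Take \(C_5^2\) with \(\{b,2b,3b,4b\}\): the triples sum to \(6b,7b,8b,9b\), that is \(b,2b,3b,4b\), all nonzero.

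A triple containing both \(x\) and \(y\) has sum \(x+y+z=-z'\), where \(z,z'\) range over the remaining four elements, because \(\sigma(S)=0\). So I only need \(x+y+z\neq0\) for the four choices of \(z\). For \(C_5^2\) we have \(x+y=0\), so the sum is \(z\in\{b,\dots,4b\}\), which is nonzero. For \(C_p^2\) we have \(x+y=-6b\), and \(-6b+z\in\{-6b,-5b,-4b,-3b\}\) is nonzero for \(p\ge7\).

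The same three-case split handles \(C_3^3\), \(C_2\oplus C_3^2\), and \(C_2^2\oplus C_3\). For \(C_3^3\), the two-\(a\) case gives \(c+d+z\) with \(z\in\{0,c,d,c+d\}\), which yields \(c+d,\,2c+d,\,c+2d,\,2c+2d\), all nonzero. The no-\(a\) triples from \(\{0,c,d,c+d\}\) sum to \(2c+d,\,c+2d,\,c+d,\,2c+2d\), again all nonzero.

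For \(C_2\oplus C_3^2\) the \(a\)-coordinate lives in \(C_2\). Here two \(a\)'s cancel, so the two-\(a\) case becomes \((c+d)+z\), exactly as before. Similarly, for \(C_2^2\oplus C_3\), I project to the \(C_3\)-coordinate and the \(\langle a,b\rangle\)-coordinates and run a small case check.

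For \(C_2^4\), any three of \(e_1,\dots,e_4\) sum to a nonzero vector. Any triple involving \(0\) is a sum of two distinct nonzero elements, which is nonzero. A triple involving \(e_1+e_2+e_3+e_4\) and two of the \(e_i\) sums to a weight-two vector, which is nonzero.

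The rows \(C_8\oplus C_2\), \(C_4^2\), and \(C_4\oplus C_2^2\) I will simply tabulate, listing all \(20\) sums in coordinates, in the same style as Table~\ref{tab:small-sum-certificates}. Take \(C_8\oplus C_2\) as an example. The \(b\)-coordinate of a triple is the parity of how many of \(b\) and \(a+b\) it contains, so only triples containing both or neither of them can vanish. The triples containing neither are \(\{0,a,2a\}\), \(\{0,a,4a\}\), \(\{0,2a,4a\}\), and \(\{a,2a,4a\}\), with sums \(3a,5a,6a,7a\). The triples containing both have sums \(a+z\) with \(z\in\{0,a,2a,4a\}\), that is \(a,2a,3a,5a\). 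All of these are nonzero in \(C_8\). \(C_4^2\) and \(C_4\oplus C_2^2\) are handled in the same way.

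The total-sum checks are one-line additions. For example, in \(C_2^4\) the sum is \(2(e_1+\dots+e_4)=0\). In \(C_4\oplus C_2^2\) it is \(4a+2b+2c=0\). In \(C_8\oplus C_2\) the sum is \(8a+2b=0\), and distinctness is clear from coordinates.

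The main obstacle is the bookkeeping rather than any real idea. I expect errors to be most likely in the three \(2\)-group rows with order-\(4\) or order-\(8\) coordinates, where coordinate cancellations are less transparent. For those rows I will present the full list of triple sums explicitly, so that a reader can audit them.
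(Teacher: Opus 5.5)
Your proposal is correct and follows essentially the paper's route: project to the distinguished coordinate, split triples by how many of the two off-line points they contain, and check the remaining cases directly, with the Hamming-weight argument for \(C_2^4\) and the \(b\)-parity argument for \(C_8\oplus C_2\). The paper avoids tabulating \(C_4^2\) and \(C_4\oplus C_2^2\) by noting that the first coordinates are \(0,0,0,1,1,2\), so only the patterns \(0+0+0\) and \(1+1+2\) need checking; also, your claim that \(x+y+z=-z'\) for a single element \(z'\) is a harmless slip, since the sum is minus the sum of the other three line elements and your direct check of the four sums \(x+y+z\) does not use it.
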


\begin{proof}
In every row the displayed elements are pairwise distinct and have total sum zero. We verify that no three of them sum to zero.

First consider the two rows \(C_p^2\). For \(p=5\), let \(Q=\{1,2,3,4\}\subseteq C_5\); for \(p\ge7\), let \(Q=\{0,1,2,3\}\subseteq C_p\). In both cases no three distinct elements of \(Q\) sum to zero, and \(q_0:=\sum_{q\in Q}q\) does not lie in \(Q\). The displayed subset has the form
\[
  S=(\{0\}\times Q)\cup\{(1,0),(-1,-q_0)\}\subseteq C_p^2.
\]
A zero-sum triple using exactly one off-line point has nonzero first coordinate. A zero-sum triple using no off-line point would give three distinct elements of \(Q\) summing to zero. A zero-sum triple using both off-line points would require the remaining line point to be \(q_0\), which is not in \(Q\). Thus \(0\notin\trsum{S}\).

For the row \(C_3^3\), put \(Q=\{0,c,d,c+d\}\subseteq\langle c,d\rangle\). No three distinct elements of \(Q\) sum to zero. Projecting to the \(a\)-coordinate, a zero-sum triple in
\[
  \{0,c,d,c+d,a,-a+c+d\}
\]
must either use three points of \(Q\), or use the two off-line points \(a\) and \(-a+c+d\) together with one point of \(Q\). The first case is impossible by the choice of \(Q\). In the second case the point of \(Q\) would have to be \(-c-d=2c+2d\), which is not in \(Q\). The same argument applies to the row \(C_2\oplus C_3^2\): the \(C_2\)-coordinate forces a zero-sum triple either to use no elements from the nonzero \(C_2\)-layer or to use both of them; the latter case again requires the missing point \(2c+2d\notin Q\).

For \(C_2^2\oplus C_3\), write
\[
  S=\{0,c,b,b+c,a,a+c\}.
\]
The \(C_2^2\)-projection shows that a zero-sum triple cannot use one point from the \(a\)-layer and one point from the \(b\)-layer, since no point of the \(a+b\)-layer is displayed. It also cannot use three points from the zero layer, which contains only \(0\) and \(c\). Hence it would have to consist of the two displayed elements from one nonzero layer, either \(a+\langle c\rangle\) or \(b+\langle c\rangle\), together with one element from the zero layer. The two displayed elements in such a nonzero layer have \(C_3\)-coordinate sum \(c\), so the zero-layer element would need \(C_3\)-coordinate \(-c=2c\), but no such zero-layer element is displayed.

It remains to check the \(2\)-primary rows. In \(C_8\oplus C_2\), a triple with exactly one element of nonzero \(b\)-coordinate cannot sum to zero. A triple with no nonzero \(b\)-coordinate is a three-element subset of \(\{0,a,2a,4a\}\), whose possible sums are not \(0\) in \(C_8\). A triple using both \(b\) and \(a+b\) would need a remaining element equal to \(-a=7a\), which is not displayed.

For \(C_4^2\), the displayed elements \(0,b,2b,a,a+b,2a\) have first coordinates
\[
  0,0,0,1,1,2.
\]
The only possible first-coordinate patterns summing to \(0\) modulo \(4\) are \(0+0+0\) and \(1+1+2\). In the first case the second-coordinate sum is \(3b\ne0\), and in the second it is \(b\ne0\). The row \(C_4\oplus C_2^2\) is identical at the level of the first coordinate: the possible patterns are again \(0+0+0\) and \(1+1+2\), and the remaining \(C_2^2\)-coordinate sum is \(b+c\) in both cases.

Finally, in \(C_2^4\), write \(u=e_1+e_2+e_3+e_4\). A triple containing \(0\) cannot sum to zero, since the other two elements would have to be equal. A triple not containing \(0\) is either the sum of three distinct basis vectors, which has Hamming weight \(3\), or is of the form \(u+e_i+e_j\) with \(i\ne j\), which has Hamming weight \(2\). Hence it is never zero.
\end{proof}

\section{The exceptional groups}

\begin{lemma}\label{lem:exceptional-groups}
Let \(G\) be a finite abelian group with \(|G|\le8\), or let \(G\cong C_3^2\). Then every admissible subset of \(G\) has a zero-sum triple partition. Hence \(\mu(G)=\infty\).
\end{lemma}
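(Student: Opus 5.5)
The plan is to reduce to a short list of cases by cardinality and use the complement of \(S\) in \(G\). By Lemma~\ref{lem:lower-bound-six-general}, admissible sets of size \(0\) or \(3\) are always partitionable. If \(|G|\le 8\), the only remaining admissible size is \(6\). If \(G\cong C_3^2\), the remaining sizes are \(6\) and \(9\). For size \(6\), Lemma~\ref{lem:six-reduction} reduces the task to showing \(0\in\trsum{S}\). I will write \(S=G\setminus R\) and use \(\sigmaS(R)=\sigmaS(G)-\sigmaS(S)=\sigmaS(G)\). Here \(\sigmaS(G)\) is the unique involution of \(G\) if there is exactly one, and \(0\) otherwise.

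For \(|G|\le 8\), groups of order at most \(5\) contain no six-element subset, so only orders \(6,7,8\) remain.
\begin{itemize}
\item For order \(6\), \(G\cong\Z_6\) and \(S=G\), which contains the zero-sum triple \(\{0,1,5\}\).
\item For order \(7\), \(R=\{r\}\) with \(r=\sigmaS(G)=0\), so \(S=\Z_7\setminus\{0\}\), which contains \(\{1,2,4\}\).
\item For \(C_2^3\) and \(C_2\oplus C_4\), there are at least two involutions, so \(\sigmaS(G)=0\). Then \(R=\{r,s\}\) would need \(r+s=0\) with \(r\ne s\). This is impossible in \(C_2^3\), and in \(C_2\oplus C_4\) it only allows \(s=-r\ne r\). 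Those complements I will check directly, or argue via a small zero-sum triple avoiding \(\{r,-r\}\); for instance, \(\{(1,0),(0,1),(1,3)\}\) and its images under automorphisms should suffice.
\item For \(\Z_8\), \(\sigmaS(G)=4\), so \(\{r,s\}\) is one of \(\{0,4\},\{1,3\},\{2,6\},\{5,7\}\). For each, I will exhibit a zero-sum triple avoiding it: \(\{1,2,5\}\), \(\{0,2,6\}\) or \(\{2,7,7\}\)-type choices such as \(\{0,1,7\}\), \(\{1,3,4\}\), and \(\{0,1,7\}\)/\(\{1,2,5\}\) respectively.
\end{itemize}

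For \(G\cong C_3^2\), I will use the key structural fact that three distinct elements sum to zero exactly when they form an affine line. Indeed, \(a+b+c=0\) means \(c=-a-b=2a+2b\), the third point on the line through \(a,b\). When \(|S|=9\), \(S=G\) is partitioned into the three cosets of any order-\(3\) subgroup, and each coset sums to \(3x=0\). When \(|S|=6\), the complement \(R\) has size \(3\) and \(\sigmaS(R)=\sigmaS(G)=0\), so \(R\) is an affine line. Then \(S\) is the union of the two other parallel lines, both of which are zero-sum triples, giving a partition.

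The main obstacle is only bookkeeping: the order-\(8\) groups, especially \(C_2\oplus C_4\). There the pairs \(\{r,-r\}\) must each be checked to miss some zero-sum triple. I would handle this by listing all zero-sum triples once and verifying that no pair \(\{r,-r\}\) meets all of them.
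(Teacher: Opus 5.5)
Your overall route matches the paper's: reduce to \(|S|=6\) (and \(|S|=9\) for \(C_3^2\)), pass to the complement \(R\) with \(\sigmaS(R)=\sigmaS(G)\), and use the affine-line description of zero-sum triples in \(C_3^2\). The cases \(\Z_7\), \(C_2^3\) and \(C_3^2\) are handled correctly. For the order-\(8\) groups, however, the lemma is nothing more than a finite certificate, and your certificates fail as written. In \(\Z_8\) there are three problems. First, \(\{2,6\}\) is not an admissible complement, since \(2+6\equiv0\not\equiv4\). Second, ``\(\{2,7,7\}\)'' is not a three-element set. Third, for the genuine pair \(\{5,7\}\), both triples you offer, \(\{0,1,7\}\) and \(\{1,2,5\}\), meet \(\{5,7\}\). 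A correct list is \(\{0,4\}\mapsto\{1,2,5\}\), \(\{1,3\}\mapsto\{0,2,6\}\), and \(\{5,7\}\mapsto\{0,2,6\}\) (or \(\{1,3,4\}\)).

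In \(C_2\oplus C_4\), the proposed witness \(\{(1,0),(0,1),(1,3)\}\) cannot work, and neither can any of its automorphic images. The only admissible complements are \(R_1=\{(0,1),(0,3)\}\) and \(R_2=\{(1,1),(1,3)\}\), which partition the elements of order \(4\). Your triple contains two elements of order \(4\) that are not negatives of each other, so it meets both \(R_1\) and \(R_2\). Automorphisms permute the order-\(4\) elements and commute with negation, so every image of the triple also meets both. The clean fix is the triple of involutions \(\{(1,0),(0,2),(1,2)\}\). It is zero-sum and avoids both \(R_i\), so Lemma~\ref{lem:six-reduction} settles both cases at once. Its complementary blocks are \(\{0,(1,1),(1,3)\}\) and \(\{0,(0,1),(0,3)\}\) respectively, as in the paper. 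Finally, for \(\Z_6\) the right observation is that \(\sigmaS(\Z_6)=3\neq0\), so the case is vacuous. Exhibiting \(\{0,1,5\}\) is misleading, since its complement \(\{2,3,4\}\) sums to \(3\) and \(\Z_6\) is not partitionable.
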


\begin{proof}
As in Lemma~\ref{lem:lower-bound-six-general}, admissible subsets of cardinality \(0\) or \(3\) are partitionable. If \(|G|\le5\), there are no larger admissible cardinalities, so the assertion is immediate. It remains to inspect the abelian groups of orders \(6\), \(7\), and \(8\), and then \(C_3^2\).

The only abelian group of order \(6\) is \(C_6\). Its only six-element subset is the whole group, but
\[
  \sigmaS(C_6)=0+1+2+3+4+5=15\equiv3\pmod6,
\]
so no six-element admissible subset exists. The only abelian group of order \(7\) is \(C_7\). If \(S=C_7\setminus\{c\}\) and \(\sigmaS(S)=0\), then \(c=0\), since \(\sigmaS(C_7)=0\). Thus
\[
  S=\{1,2,3,4,5,6\}=\{1,2,4\}\sqcup\{3,5,6\},
\]
and both displayed triples have sum zero modulo \(7\).

There are three abelian groups of order \(8\). For \(C_8\), write \(S=C_8\setminus\{a,b\}\) with \(a\ne b\). Since \(\sigmaS(C_8)=28\equiv4\pmod8\), the condition \(\sigmaS(S)=0\) gives \(a+b\equiv4\pmod8\). The unordered pairs of distinct residues satisfying this congruence are exactly
\[
  \{0,4\},\quad \{1,3\},\quad \{5,7\}.
\]
In these three cases, respectively, \(S\) contains the zero-sum triples
\[
  \{1,2,5\},\qquad \{0,2,6\},\qquad \{0,2,6\}.
\]
Since \(|S|=6\) and \(\sigmaS(S)=0\), the complement of the displayed zero-sum triple is also a zero-sum triple, so \(S\) is partitionable.

For \(C_2^3\), the sum of all group elements is zero. Every six-element subset is the complement of a two-element set \(\{x,y\}\), and its total sum is \(x+y\). This is zero only if \(x=y\), which is impossible for a two-element set. Hence no six-element admissible subset exists.

For \(C_4\oplus C_2\), write \(G=\langle a\rangle\oplus\langle b\rangle\), with \(a\) of order \(4\) and \(b\) of order \(2\). Again \(\sigmaS(G)=0\). If \(S=G\setminus\{x,y\}\) is admissible of size \(6\), then \(x+y=0\), so \(\{x,y\}\) is either \(\{a,-a\}\) or \(\{a+b,-a+b\}\). In the first case the remaining six elements are partitioned as
\[
  \{0,a+b,-a+b\}\sqcup\{b,2a,2a+b\},
\]
and in the second case as
\[
  \{0,a,-a\}\sqcup\{b,2a,2a+b\}.
\]
Each displayed triple has sum zero.

Finally let \(G=C_3^2\). The whole group has total sum zero and is partitioned into three parallel affine lines. It remains to consider a six-element admissible subset \(S\). Its complement \(T=G\setminus S\) has cardinality \(3\) and total sum zero. In \(C_3^2\), every three-element zero-sum subset is an affine line: if \(T=\{x,y,z\}\) and \(x+y+z=0\), then with \(d=y-x\) one has \(z=x-d\), so \(T=x+\langle d\rangle\). The complement of an affine line in \(C_3^2\) is the union of the two other parallel affine lines, and each affine line has sum zero. Hence \(S\) is partitionable into two zero-sum triples.
\end{proof}

\section{Proof of the classification}

\begin{proof}[Proof of Theorem~\ref{thm:finite-abelian-main}]
The exceptional cases are exactly Lemma~\ref{lem:exceptional-groups}. Conversely, let \(G\) be a finite abelian group with \(|G|>8\) and \(G\not\cong C_3^2\). We show that \(G\) contains one of the witness subgroups from Lemma~\ref{lem:cyclic-witness-all} or Lemma~\ref{lem:model-witnesses}. Lemma~\ref{lem:subgroup-inheritance} will then give \(\mu(G)=6\).

If \(G\) contains an element of order \(m\ge9\), then \(G\) contains a cyclic subgroup \(C_m\), and Lemma~\ref{lem:cyclic-witness-all} applies. Since the exponent of a finite abelian group is attained as the order of an element, we may therefore assume that the exponent of \(G\) is at most \(8\). By the structure theorem for finite abelian groups, the following cases exhaust all possibilities.

If the exponent is \(2\), then \(G\cong C_2^r\). Since \(|G|>8\), we have \(r\ge4\), so \(G\) contains \(C_2^4\).

If the exponent is \(3\), then \(G\cong C_3^r\). The cases \(r=0,1\) have order at most \(3\), and the case \(r=2\) is the excluded group \(C_3^2\). Thus \(r\ge3\), and \(G\) contains \(C_3^3\).

If the exponent is \(4\), then
\[
  G\cong C_4^a\oplus C_2^b,
  \qquad a\ge1.
\]
If \(a\ge2\), then \(G\) contains \(C_4^2\). If \(a=1\) and \(b\ge2\), then \(G\) contains \(C_4\oplus C_2^2\). The remaining cases \(C_4\) and \(C_4\oplus C_2\) have order at most \(8\).

If the exponent is \(5\) or \(7\), then \(G\cong C_p^r\) with \(p\in\{5,7\}\). The case \(r=1\) has order at most \(7\), while \(r\ge2\) gives a subgroup \(C_p^2\).

If the exponent is \(6\), then
\[
  G\cong C_2^a\oplus C_3^b,
  \qquad a,b\ge1.
\]
The case \(a=b=1\) is cyclic of order \(6\). If \(a\ge2\), then \(G\) contains \(C_2^2\oplus C_3\). If \(a=1\) and \(b\ge2\), then \(G\) contains \(C_2\oplus C_3^2\).

If the exponent is \(8\), then
\[
  G\cong C_8^a\oplus C_4^b\oplus C_2^c,
  \qquad a\ge1.
\]
The case \(G\cong C_8\) has order \(8\). In every other case, \(G\) contains \(C_8\oplus C_2\), by taking an element of order \(2\) from one of the remaining direct factors.

Thus every non-exceptional finite abelian group contains one of the required witness subgroups. This proves \(\mu(G)=6\) in all non-exceptional cases and completes the proof.
\end{proof}

\begin{proof}[Proof of Corollary~\ref{cor:cyclic-main}]
A cyclic group is never isomorphic to \(C_3^2\). Hence Theorem~\ref{thm:finite-abelian-main} gives \(\muZ(n)=\infty\) for \(1\le n\le8\), and \(\muZ(n)=6\) for \(n\ge9\).
\end{proof}
\section*{Declaration on the use of AI tools}

The author used GPT-5.5 Pro as an auxiliary tool during the preparation of this work, specifically for enumerating cases, generating possible directions for counterexample searches, and identifying potentially relevant literature. All AI-assisted outputs were independently reviewed, verified, and revised by the author. All references cited in this work were checked against their original sources. The mathematical arguments, conclusions, and final form of the manuscript are the sole responsibility of the author.

\end{document}